\documentclass[a4paper,12pt,reqno]{amsart}
\usepackage[centertags]{amsmath}
\usepackage{amsfonts,amssymb,amsthm} 
\usepackage[dvips]{graphicx} 
\usepackage{psfrag}
\usepackage[english]{babel}
\usepackage{newlfont}
\usepackage{color}
\usepackage[body={16cm,23cm},centering]{geometry} 
\usepackage{fancyhdr}
\pagestyle{fancy}

\fancyhf{}
\fancyhead[RO,LE]{\footnotesize\thepage}
\fancyhead[LO]{\scriptsize\rightmark}
\fancyhead[RE]{\scriptsize\leftmark}

\setlength{\headheight}{5pt}  
\setlength{\headsep}{25pt} 
\usepackage[active]{srcltx}
%\usepackage[scrtime]{prelim2e} 
%\usepackage[notref,notcite]{showkeys} 

% THEOREMS -------------------------------------------------------
\newtheorem{theorem}{Theorem}[section]
\newtheorem{lemma}[theorem]{Lemma}

\theoremstyle{definition}
\newtheorem{definition}[theorem]{Definition}

\newtheorem{rem}[theorem]{Remark}

\numberwithin{equation}{section}

% NEW COMMANDS -------------

\newcommand{\R}{\mathbb{R}}
\newcommand{\N}{\mathbb{N}}

\newcommand{\eps}{\varepsilon}

\newcommand{\diver}{\operatorname{div}}

\renewcommand{\MR}[1]{\null}

% MATH -----------------------------------------------------------

\newcommand{\Haus}{\mathcal{H}}

% ----------------------------------------------------------------

\begin{document}

\title[On representation of boundary integrals involving mean curvature]{On representation of boundary integrals involving the mean curvature for mean-convex domains}
\author{Yoshikazu Giga}
\address{Graduate School of Mathematical Sciences \\ University of Tokyo \\Komaba 3-8-1, \-Meguro-ku \\ Tokyo 153-8914, Japan}
\email{labgiga@ms.u-tokyo.ac.jp}

\author{Giovanni Pisante}
\address{Dipartimento di Matematica e Fisica \\ Seconda Universit\`a degli Studi diNapoli \\ Viale Lincoln, 5 \\ 81100 Caserta, Italy}
\email{giovanni.pisante@unina2.it}
\subjclass{}
\keywords{}

\maketitle

\begin{abstract}
Given a mean-convex domain $\Omega\subset \R^n$ with boundary of class $C^{2,1}$, we provide a representation formula for a boundary integral of the type
\[
\int_{\partial \Omega} f(k(x)) \, d\mathcal{H}^{n-1}
\]
where $k\geq 0$ is the mean curvature of $\partial \Omega$ and $f$ is non-increasing and sufficiently regular, in terms of volume integrals and defect measure on the ridge set.
\end{abstract}

\section{Introduction}

In this note we are interested in giving an explicit representation for a particular class of curvature depending integral functionals defined on compact manifolds without boundary. We restrict our analysis to $C^{2,1}$ regular manifolds with non-negative mean curvature that can be identified as boundaries of mean-convex domains. More precisely, for a mean-convex domain $\Omega \subset \R^n$, denoted by $k(x)$ the mean curvature of $\partial \Omega$, we are interested in recovering the value of boundary integrals of the type
\begin{equation}\label{integral}
\int_{\partial \Omega} f(k(x)) \, d\mathcal{H}^{n-1}
\end{equation}
in terms of the behavior of $f$ inside $\Omega$ when $f$ satisfies suitable regularity assumptions. It turns out that, if $f$ is a differentiable non-increasing function (cf. Theorem \ref{mainthm} for the precise assumptions), \eqref{integral} can be expressed as the sum of volume integrals plus a defect measure $\delta^f$ concentrated on the ridge set of $\Omega$ as follows
\begin{equation}\label{formula}
\int_{\partial \Omega} f(k(x)) \,d \Haus^{n-1} =  \int_{\Omega} k(x) f(k(x))\,dx - \int_{\Omega} f'(k(x))|D^{2}d(x)|^{2} \, dx + \delta^{f}(\Sigma)
\end{equation}
where with $d$ and $\Sigma$ we have denoted the distance function from $\partial \Omega$ and its singular set respectively.

Our initial motivation for this study come from the understanding of a useful consequence of the area formula and Fubini's theorem, which is sometimes referred to as Heintze-Karcher's inequality (see \cite{HeiKar78-000} and  \cite[Theorem 6.16]{MonRos05-000}). In our setting it provides an upper bound of the measure of $\Omega$ in terms of the boundary integral of $1/k$. Indeed, it states that for any regular strictly mean-convex domain $\Omega\subset \R^{n}$ we have
\begin{equation}\label{HK-ineq}
\cal L^n (\Omega) \leq \frac{1}{n}\int_{\partial \Omega} \frac{1}{k(x)}\, d \cal{H}^{n-1},
\end{equation}
where $\cal L^n$ denotes the $n$-dimensional Lebesgue measure. The case of equality in \eqref{HK-ineq} is particularly interesting, as is known that equality occurs if and only if $\Omega$ is a ball and this property, combined with Minkowski integral formula, gives an elegant proof of the classical Alexandrov's theorem that identifies the sphere as the unique compact connected surface with constant mean curvature (see.  \cite[Theorem 6.17]{MonRos05-000}). This point of view has been recently used in \cite{He:2009p175} to prove an anisotropic version of Alexandrov's theorem for compact embedded hypersurfaces with constant anisotropic mean curvature.

Our goal was to obtain a sharp estimate for the error term in \eqref{HK-ineq}, 
and this led us to the study of the boundary integral 
\begin{equation*}\label{1suf}
\int_{\partial \Omega} f(k(x)) \, d \cal{H}^{n-1}.
\end{equation*}

An application of \eqref{formula} with $f(t)=1/t$ allows us to write
\[
\int_{\partial \Omega} \frac{1}{k(x)} \,d \Haus^{n-1} - n \cal{L} (\Omega)= \int_{\Omega} \frac{|D^{2}d(x)|^{2}-(n-1)k^2(x)}{k^2(x)} \, dx + \delta^{f}(\Sigma) \geq 0
\]
filling the gap in \eqref{HK-ineq} by explicitly expressing the error term as a sum of a volume integral and a defect measure in the spirit of  \cite{Aviles:1996p212}.

The idea behind the proof of the main result can be easily explained by a formal application of the divergence theorem. Indeed, recalling that, denoted by $\nu(x)$ the unit inner normal to $\partial \Omega$ at a point $x$, we have $\nu(x)=\nabla d(x)$, the divergence theorem should give us
\[
\int_{\partial \Omega} f(k) d \cal{H}^{n-1} = \int_{\partial \Omega}f(k)\nabla d \cdot \nu \,d \cal{H}^{n-1} \approx - \int_\Omega \diver \left( f(k)\nabla d \right) .
\]
The meaning of the last integral in the previous formula has to be clarified, since the integrand term a priori is just defined as a distribution. This is a key step of the proof of \eqref{formula} in Theorem \ref{mainthm}, whose main ingredient is indeed the identification of $-\diver\left( f(k)\nabla d \right)$ as a non-negative Radon measure which is absolutely continuous with respect to the $(n-1)$-dimensional Hausdorff measure. The formula then follows once we identify the densities of its Lebesgue decomposition. 

The paper is structured as follows. In the next section we recall some preliminary results on the regularity of the distance function and on functionals of measures. In Section \ref{sec-meas} we prove that $-\diver\left( f(k)\nabla d \right)$ is indeed a non-negative Radon measure in a neighborhood of $\Omega$. The proof of the main result is presented in Section \ref{sec-main}. In the last section the anisotropic version of the representation formula \eqref{formula} is briefly discussed. 
\section{Preliminaries}

We start recalling some classical regularity results concerning the distance function from the boundary of a domain that have been proved in  \cite{Li:2005p201,Li:2005p47}. Let $\Omega$ be a domain in $\R^n$. We define the singular set $\Sigma$ as the complement of the open set $G$ defined as the largest open subset of $\Omega$ such that for every $x\in G$ there is a unique closest point on $\partial \Omega$ from $x$. The following regularity result holds true.
\begin{theorem}[Li-Nirenberg] \label{lini-01}
Suppose that $\partial \Omega$ is of class $C^{\ell,\alpha}$, with $\ell\geq 2$ and $0<\alpha\leq 1$, then the distance function belongs to $C^{\ell,\alpha}(G\cup \partial \Omega)$. 
\end{theorem}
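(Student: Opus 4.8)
The plan is to realize $d$ as one component of the inverse of the \emph{normal map} and then to exploit the eikonal identity $\nabla d=\nu\circ\pi$ to gain one order of differentiability over what the inverse function theorem gives directly. Fix $x_0\in G$ and let $y_0=\pi(x_0)\in\partial\Omega$ be its unique nearest point, so that $x_0=y_0+d(x_0)\,\nu(y_0)$, where $\nu$ is the inner unit normal and $\pi$ the nearest-point projection. Locally parametrize $\partial\Omega$ by a $C^{\ell,\alpha}$ chart $\gamma$ and introduce the normal map $\Phi(u,t)=\gamma(u)+t\,\nu(\gamma(u))$. Since the unit normal is built from first derivatives of the $C^{\ell,\alpha}$ parametrization, one has $\nu\circ\gamma\in C^{\ell-1,\alpha}$, and therefore $\Phi\in C^{\ell-1,\alpha}$, the regularity being limited by the normal term $t\,\nu(\gamma(u))$.

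Next I would show that $D\Phi$ is nonsingular at $(u_0,t_0)$, where $\gamma(u_0)=y_0$ and $t_0=d(x_0)$. Differentiating $\Phi$ along the principal directions $e_i$ of $\partial\Omega$ at $y_0$ produces $(1-t\kappa_i)e_i$, with $\kappa_i$ the principal curvatures relative to $\nu$, while $\partial_t\Phi=\nu$; hence $\det D\Phi$ is proportional to $\prod_i(1-t\kappa_i)$. The crucial geometric point is that a vanishing of this determinant signals a focal point, beyond which the nearest point ceases to be unique; since $x_0\in G$ lies strictly before the cut locus along its normal segment, $1-t_0\kappa_i\neq 0$ for every $i$, so $D\Phi$ is invertible. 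The inverse function theorem in Hölder classes then yields a $C^{\ell-1,\alpha}$ local inverse $x\mapsto(u(x),t(x))$, which by uniqueness of the nearest point coincides near $x_0$ with $x\mapsto(\pi(x),d(x))$. In particular $d=t\in C^{\ell-1,\alpha}$ and $\pi=\gamma\circ u\in C^{\ell-1,\alpha}$ in a neighborhood of $x_0$.

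The regularity is then upgraded by one order using the first-order structure of $d$. At points with a unique nearest point, $d$ is differentiable with $\nabla d(x)=\nu(\pi(x))=(\nu\circ\gamma)\circ u$. Since $\nu\circ\gamma\in C^{\ell-1,\alpha}$ and $u\in C^{\ell-1,\alpha}$, the composition $\nabla d$ is $C^{\ell-1,\alpha}$, and a function whose gradient is $C^{\ell-1,\alpha}$ is itself $C^{\ell,\alpha}$. As $x_0\in G$ was arbitrary this gives $d\in C^{\ell,\alpha}_{\mathrm{loc}}(G)$. To include the boundary one runs the same argument at $t_0=0$, where $\Phi(u,0)=\gamma(u)$ and $D\Phi(u_0,0)$ is manifestly invertible by the tubular neighborhood theorem, so the diffeomorphism and the gradient identity persist up to $\partial\Omega$, producing $d\in C^{\ell,\alpha}(G\cup\partial\Omega)$.

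The main obstacle is the nondegeneracy step: one must rigorously tie the \emph{analytic} condition $\det D\Phi\neq0$ to the \emph{geometric} hypothesis defining $G$, that is, prove that on $G$ no focal radius is attained along the minimizing normal segment. A secondary delicate point is the uniform control needed to pass to the limit $t\to0^+$ and to patch the local Hölder estimates into a global statement on all of $G\cup\partial\Omega$.
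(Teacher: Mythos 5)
First, a point of orientation: the paper does not prove this statement at all --- it is quoted as a known result of Li--Nirenberg with references to their two papers, so there is no internal proof to compare against. Your sketch follows what is in fact the standard route to this regularity statement (normal map $\Phi(u,t)=\gamma(u)+t\,\nu(\gamma(u))$, inverse function theorem in H\"older classes, then the bootstrap via $\nabla d=\nu\circ\pi$ to recover the last derivative), and the architecture is sound: recognizing that the inverse function theorem alone only yields $C^{\ell-1,\alpha}$ and that the eikonal structure upgrades this by one order is exactly the right mechanism.

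The one genuine gap is the step you yourself flag: you assert that $x_0\in G$ forces $1-t_0\kappa_i(y_0)\neq 0$ for every principal curvature $\kappa_i$, but give no argument, and without it $D\Phi$ may be singular and the whole construction collapses. This is closable by an elementary comparison rather than by any cut-locus machinery: since $y_0$ is a nearest boundary point of $x_0$ at distance $t_0$, the ball $B_{t_0}(x_0)$ is contained in $\Omega$ and touches $\partial\Omega$ at $y_0$ from inside, which forces $\kappa_i(y_0)\le 1/t_0$, i.e.\ $1-t_0\kappa_i(y_0)\ge 0$; strict inequality then follows because $x_0\in G$ implies $t_0<s(y_0)$, so one may move to $x'=y_0+t'\nu(y_0)$ with $t'>t_0$ for which $y_0$ is still a nearest point, giving $\kappa_i(y_0)\le 1/t'<1/t_0$. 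Two smaller points should also be made explicit: the identification of the local inverse of $\Phi$ with $x\mapsto(\pi(x),d(x))$ requires that all points near $x_0$ have their unique nearest point inside the chosen chart (continuity of $\pi$ on the open set $G$), and the conclusion $d\in C^{\ell,\alpha}(G\cup\partial\Omega)$ is a local statement, so the ``uniform patching'' of H\"older constants you worry about at the end is not actually needed. With the nondegeneracy argument supplied, your proof is complete and consistent with the cited references.
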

Let us comment on the properties of the singular set $\Sigma = \Omega\setminus G$ of the distance function to the boundary $\partial \Omega$ (sometimes called also {\it ridge} of $\Omega$ or {\it medial axes}). It is known that $\Sigma$ is always a connected set and has finite $(n-1)$-dimensional Hausdorff measure, in particular $\Sigma$ is a $(n-1)$-rectifiable set of finite perimeter. Another useful property is resumed here. Consider $y\in \partial \Omega$, the \emph{cut point} of $y$, denoted by $m(y)$ is defined as the point when, moving along the inner normal from $y$, the set $\Sigma$ is hit for the first time.
\begin{theorem}\label{lini-02}
Let $\Omega$ be of class $C^{2,1}$. Then from every point $y\in \partial \Omega$, the length $s(y)$ of the segment joining $y$ and its cut point $m(y)$ is Lipshitz continuous in $y$. 
\end{theorem}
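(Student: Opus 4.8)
The plan is to reduce the statement to a one--sided increment bound and to prove it by transporting, from a point $y$ to a nearby point $y'$, the geometric obstruction that prevents the normal ray from $y$ from reaching past its cut point. First I would record the consequences of Theorem~\ref{lini-01}: taking $\ell=2$, $\alpha=1$ gives $d\in C^{2,1}(G\cup\partial\Omega)$, so the inner normal $\nu=\nabla d$ is of class $C^{1,1}$ on $\partial\Omega$ (with some Lipschitz constant $K$) and the principal curvatures, the eigenvalues of $D^2d$ restricted to $\partial\Omega$, are bounded and Lipschitz in $y$. Compactness and the $C^2$ regularity give a uniform interior ball radius $r_0>0$, hence $r_0\le s(y)\le \operatorname{diam}\Omega$. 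I would also use the elementary characterisation $s(y)=\sup\{t>0:\ d(y+t\nu(y))=t\}$, together with the fact that $s(y')\le t$ whenever $d(y'+t\nu(y'))<t$. Since $\partial\Omega$ is compact and connected, it suffices to prove the local bound $s(y')\le s(y)+C|y-y'|$ for $|y-y'|$ small with a uniform constant $C$; a standard chaining argument then yields global Lipschitz continuity, and swapping $y,y'$ gives the two--sided estimate.

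Next, fix $y$, set $s_0=s(y)$ and $p=m(y)=y+s_0\nu(y)\in\Sigma$. By the classical dichotomy for the cut point, either (b) $p$ is equidistant, i.e. there is a second foot point $z\neq y$ with $|p-z|=|p-y|=s_0$, or (a) $p$ is a focal point, i.e. $s_0=1/\kappa_{\max}(y)$ with $\kappa_{\max}(y)>0$ the largest principal curvature. For the upper bound on $s(y')$ I would set $t^\ast=s_0+C|y-y'|$ and compare the two normal rays; the Lipschitz bound on $\nu$ gives
\[
\bigl|\,(y'+t^\ast\nu(y'))-(y+t^\ast\nu(y))\,\bigr|\le (1+Kt^\ast)\,|y-y'|=:C_1\,|y-y'|.
\]
In case (b) I would estimate $d(y+t^\ast\nu(y))$ by the competitor $z$. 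Using $p-z=s_0\nu(z)$ and writing $h=t^\ast-s_0$, $\cos\theta=\nu(y)\!\cdot\!\nu(z)$, a direct computation gives
\[
\bigl|(y+t^\ast\nu(y))-z\bigr|^2=(t^\ast)^2-2\,h\,s_0\,(1-\cos\theta),
\]
so that $d(y+t^\ast\nu(y))\le t^\ast-c_\theta\,C\,|y-y'|$ with $c_\theta\sim (1-\cos\theta)$ bounded below as long as the two normals are uniformly transversal. Combining with the first displayed inequality and using that $d$ is $1$--Lipschitz, $d(y'+t^\ast\nu(y'))\le t^\ast-(c_\theta C-C_1)|y-y'|<t^\ast$ once $C>C_1/c_\theta$, which forces $s(y')\le t^\ast$ as desired.

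In case (a) the same conclusion follows more cheaply: one always has $s(y')\le \rho(y'):=1/\kappa_{\max}^{+}(y')$, the focal distance, and since $\kappa_{\max}^{+}$ is Lipschitz and bounded below near $y$ (there it equals $1/s_0\ge 1/\operatorname{diam}\Omega$), $\rho$ is Lipschitz near $y$ and $s(y')\le \rho(y')\le s_0+C|y-y'|$. The symmetric lower bound $s(y')\ge s_0-C|y-y'|$ I would obtain dually, by transporting a slightly shrunk interior ball tangent at $y$ to an interior ball tangent at $y'$ and using the bounded curvature to control the loss of radius.

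Finally, the main obstacle is exactly the transition regime between the two cases. When the equidistant competitor $z$ approaches $y$ the angle $\theta\to0$, the transversality constant $c_\theta$ in the case--(b) estimate degenerates, and no uniform $C$ can be extracted from that mechanism alone; conversely the purely focal estimate is available only when $p$ is genuinely focal. The delicate point is therefore to show that the two mechanisms overlap \emph{quantitatively} --- that whenever the equidistant deficit is weak the focal distance is correspondingly close to $s_0$ and Lipschitz --- so that a single uniform constant $C$ governs both. This is precisely where the $C^{2,1}$ hypothesis (a Lipschitz second fundamental form, hence a Lipschitz focal distance and a controlled caustic) enters in an essential way, and carrying out this merged estimate is the crux of the argument.
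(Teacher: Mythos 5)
First, a remark on context: the paper does not prove this statement at all --- it is quoted verbatim from Li--Nirenberg \cite{Li:2005p201} (the paper even cites \cite[Remark 1.2]{Li:2005p201} for the sharpness of the $C^{2,1}$ hypothesis), so there is no internal proof to compare yours against. Your proposal must therefore be judged on its own, and as you yourself concede in the final paragraph, it is not a proof: it is a correct identification of the strategy together with an explicit admission that the decisive estimate is missing.

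Concretely, the gap is this. Your two mechanisms each work only in a non-degenerate regime: the equidistant competitor gives $d(y'+t^\ast\nu(y'))<t^\ast$ with a constant $C>C_1/c_\theta$, where $c_\theta\sim 1-\cos\theta$ and $\theta$ is the angle between $\nu(y)$ and $\nu(z)$; the focal estimate $s(y')\le 1/\kappa_{\max}^{+}(y')$ is useful only when $s_0$ actually equals the focal distance at $y$. A cut point can be equidistant with $z$ arbitrarily close to $y$ (so $\theta$ arbitrarily small, $c_\theta\to 0$, and $C\to\infty$) while being strictly below the focal distance, so neither mechanism applies with a uniform constant. Closing this requires a quantitative statement of the form: if $p=y+s_0\nu(y)$ admits a second foot point $z$ with $|z-y|=\epsilon$, then $1/\kappa_{\max}(y)-s_0=O(\epsilon)$ (so that the focal bound takes over with a uniform constant exactly when the transversality constant degenerates). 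That linear-in-$\epsilon$ comparison is precisely where $C^{2,1}$ is used in an essential, non-formulaic way --- with merely $C^2$ data one only gets $o(1)$ rather than $O(\epsilon)$, and indeed the theorem fails for $C^{2,\alpha}$, $\alpha<1$. This merged estimate is the actual content of the Li--Nirenberg theorem, and it is the one step your proposal does not carry out; everything before it (the reduction to a one-sided local increment, the computation $|(y+t^\ast\nu(y))-z|^2=(t^\ast)^2-2hs_0(1-\cos\theta)$, the Lipschitz transport of the normal ray) is routine. Two minor points: the "dual" lower bound in your penultimate paragraph is redundant once the one-sided bound holds for all pairs with a uniform constant, since swapping $y$ and $y'$ already gives two-sidedness; and the dichotomy "focal or equidistant" for cut points, while classical, should be stated with a reference or a short proof since it is itself not entirely free at this regularity.
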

As pointed out in \cite[Remark 1.2]{Li:2005p201} the regularty condition $C^{2,1}$ is sharp for the validity of the previous result.

\begin{definition}[mean-convex domain]\label{meanconvex} Let $\Omega$ be a bounded domain in $\mathbb{R}^n$ with boundary $\partial \Omega$ of class $C^{1,1}$ . We let $\nu$ be the interior unit normal to $\partial \Omega$ and $\mathbf{H}_{\partial \Omega}$ the mean curvature vector of $\partial \Omega$. We say that $\Omega$ is mean-convex if $\mathbf{H}_{\partial \Omega}$ is pointing inside $\Omega$ at every point, i.e. $\mathbf{H}_{\partial \Omega} \cdot \nu \geq 0$.
\end{definition}

In the sequel we will assume that $\Omega$ is a mean-convex domain. Since the boundary of $\Omega$ is supposed to be of class $C^2$ this is equivalent to require that the mean curvature $k(y)$ is non-negative for any $y\in\partial \Omega$.

Denote by $d(x)$ the distance function form $\partial \Omega$. We will write, with a slight abuse of notation, $k(x)=-\Delta d(x)$ and $\nu(x)=\nabla d(x)$. One of the steps needed for the proof of the identity \eqref{formula}  is the fact that the distribution $S=-\diver (f(k)\nabla d)$ is a non-negative Radon measure if $\Omega$ is mean-convex. Let $S$ be a real distribution, i.e. $S(\phi)$ is real for any real valued $\phi\in \mathcal{D}$, we say that $S$ is non-negative if $S(\phi)\geq 0$ for any $\phi \in \mathcal{D}$ with $\phi \geq 0$. We will make use of the following well known result (see \cite[Th\'eor\`eme V]{Schwartz:un}). 
\begin{theorem}\label{Measure}
A non-negative distribution $S$ can be identified as a continuous linear form on $\mathcal{D}^k$ equipped with the topology induced by $C^k$ ($k\geq 0$). Moreover $S$ is identified with a non-negative Radon measure $\mu_S$ through the equality
\[
S(\phi) = \int \phi\, d\mu_S.
\]
\end{theorem}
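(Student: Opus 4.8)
The plan is to reduce the statement to the classical Riesz representation theorem, by first upgrading the a priori distributional continuity of $S$ to continuity in the uniform ($C^0$) norm on test functions supported in a fixed compact set. The whole point is that the non-negativity hypothesis forces such a local sup-norm bound, after which the identification with a Radon measure is automatic.

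First I would fix a compact set $K \subset \R^n$ and choose an auxiliary test function $\chi \in \mathcal{D}$ with $\chi \geq 0$ and $\chi \equiv 1$ on a neighborhood of $K$. The key is an order/comparison argument: for any real-valued $\phi \in \mathcal{D}$ supported in $K$ one has $\norm{\phi}_\infty\,\chi \pm \phi \geq 0$ pointwise — on $K$ because $\chi = 1$ there, and outside $K$ because $\phi$ vanishes while $\chi \geq 0$. Applying $S \geq 0$ to the two functions $\norm{\phi}_\infty\,\chi \pm \phi$ gives $S(\phi) \leq \norm{\phi}_\infty\,S(\chi)$ and $-S(\phi) \leq \norm{\phi}_\infty\,S(\chi)$, hence the local estimate $|S(\phi)| \leq C_K\,\norm{\phi}_\infty$ with $C_K = S(\chi) \geq 0$. (For complex-valued $\phi$ one applies this to the real and imaginary parts separately.)

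The second step is extension by density. The estimate just obtained shows that $S$, restricted to test functions supported in $K$, is continuous for the $C^0$ norm; since $\mathcal{D}$ is dense in the space $C_c(\R^n)$ of continuous compactly supported functions for the uniform topology (on each fixed compact set, via mollification, which is where the neighborhood condition on $\chi$ is convenient), $S$ extends uniquely to a continuous linear functional on $C_c(\R^n)$, and this extension stays non-negative by approximating non-negative functions by non-negative ones. In particular $S$ is continuous on $\mathcal{D}^0$ with the $C^0$ topology; continuity on $\mathcal{D}^k$ for $k \geq 1$ then follows at once, since the $C^k$ norm dominates the $C^0$ norm.

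Finally I would invoke the Riesz representation theorem: a non-negative continuous linear form on $C_c(\R^n)$ is represented by integration against a unique non-negative Radon measure $\mu_S$, yielding $S(\phi) = \int \phi \, d\mu_S$ first for $\phi \in C_c(\R^n)$ and hence in particular for $\phi \in \mathcal{D}$. The main obstacle is really the first step — the passage from abstract distributional continuity to a genuine sup-norm bound — since it is here that the positivity hypothesis is used essentially, through the comparison with the dominating function $\chi$; once this local bound is secured, the remaining steps are standard functional-analytic facts.
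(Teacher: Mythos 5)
Your proof is correct and follows essentially the same route as the source the paper relies on: the paper gives no proof of this statement, simply citing Schwartz's Th\'eor\`eme V, and your argument---the comparison $\norm{\phi}_\infty\,\chi \pm \phi \geq 0$ to upgrade positivity to a local $C^0$ bound, followed by density of $\mathcal{D}$ in $C_c$ and the Riesz representation theorem---is exactly the classical proof of that result.
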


Here we recall some notations on functionals of measures, we refer to \cite{Demengel:1984cx} and \cite{Goffman:1964dj} for further details. In order to understand the behavior of continuous functions at infinity (needed to deal with functions of Radon measures) we will use the following notion. Let $f:\R^m\to \R\cup \{\infty\}$ with $f(0)< +\infty$, we define its \emph{recession function} as
\[
f^\infty(p):=\lim_{t\nearrow \infty} \frac{f(t p)}{t}. 
\]

The recession function can be used to give a meaning to functions depending on pairs of Radon measures. Let $ f:\R^m \to [0,\infty]$ be continuous and such that $f^\infty$ is well defined. The recession function is positively homogeneous of degree $1$, it is finite along the direction of (at most) linear growth of $f$ and is infinite along the direction of superlinear growth. Moreover it is clear that if $\|f\|_\infty < \infty$ then $f^\infty=0$ identically. Let $\mu$ and $\lambda$ be respectively an $\R^m$-valued and a positive measure in $\Omega \subset \R^n$, we can define the measure
\[
G(\mu,\lambda) := f\left(\frac{\mu}{\lambda}\right) \lambda + f^\infty\left(\frac{\mu^s}{|\mu^s|}\right) |\mu^s|\,,
\]
i.e. for a measurable set $E$
\[
G(\mu,\lambda)(E) := \int_{E }f\left(\frac{\mu}{\lambda}(x)\right) d \lambda(x) + f^\infty\left(\frac{\mu^s}{|\mu^s|}(x)\right) |\mu^s|(x),
\]
where $\mu^s$ is the singular part of $\mu$ with respect to $\lambda$ and $ \frac{\mu}{\lambda}$ denotes the Radon-Nikodym derivative of $ \mu $ with respect to $ \lambda $. 
When $\lambda$ is the Lebesgue measure we will indicate the measure $G(\mu,\lambda)$ with $f(\mu)$. It is worth to observe that if $f$ is a convex and lower semicontinuous function, then the recession function is always well defined and the functional $G$ turns out to be lower semicontinuous with respect to the weak-$*$ convergence of measures (cf. \cite[Theorem 2.34]{AmbFusPal00-000}). 

\begin{lemma}
Let $f$ be a real-valued bounded continuous function in $(0,+\infty)$ and let $\mu$ be a positive Radon measure. Then $f(\mu)$ is bounded, i.e. $f(\mu)$ is absolutely continuous with respect to $\mathcal{L}^n$ and its density is an $L^\infty$ function.
\end{lemma}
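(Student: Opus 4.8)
The plan is to reduce everything to two elementary observations: that the boundedness of $f$ forces its recession function to vanish, so that the singular part of $\mu$ cannot contribute to $f(\mu)$, and that a bounded $f$ automatically produces a bounded density. To set things up, I would write the Lebesgue--Radon--Nikodym decomposition of $\mu$ with respect to $\mathcal{L}^n$ as $\mu = h\,\mathcal{L}^n + \mu^s$, where $h=\frac{\mu}{\mathcal{L}^n}\geq 0$ is the Radon--Nikodym derivative (non-negative because $\mu$ is a positive measure) and $\mu^s\geq 0$ is the singular part. By the definition of the functional $G$ with $\lambda=\mathcal{L}^n$, we have
\[
f(\mu) = G(\mu,\mathcal{L}^n) = f(h)\,\mathcal{L}^n + f^\infty\!\left(\frac{\mu^s}{|\mu^s|}\right)|\mu^s|.
\]

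First I would compute the recession function. Since $\norm{f}_\infty<\infty$, we have $\abs{f(tp)}\leq \norm{f}_\infty$ for every $t$ with $tp>0$, whence $\frac{f(tp)}{t}\to 0$ as $t\nearrow\infty$ and $f^\infty\equiv 0$ — exactly the remark already recorded after the definition of the recession function. Because $\mu$ is positive, so is its singular part, and the only direction entering the formula is $\frac{\mu^s}{|\mu^s|}=1$; thus $f^\infty\!\left(\frac{\mu^s}{|\mu^s|}\right)=f^\infty(1)=0$, the singular term disappears, and $f(\mu)=f(h)\,\mathcal{L}^n$ is by construction absolutely continuous with respect to $\mathcal{L}^n$.

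It then remains only to control the density $x\mapsto f(h(x))$. Its measurability is immediate, being the composition of the continuous map $f$ with the measurable function $h$, and its boundedness follows at once from $\abs{f(h(x))}\leq\norm{f}_\infty$; hence $f(h)\in L^\infty$ with $\norm{f(h)}_\infty\leq\norm{f}_\infty$. This establishes simultaneously that $f(\mu)\ll\mathcal{L}^n$ and that its density lies in $L^\infty$, which is the assertion of the lemma.

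The only delicate point I anticipate is the behaviour of $f$ at the origin: on the set $\{h=0\}$, which may carry positive Lebesgue measure, the value $f(0)$ is not a priori defined, since $f$ is prescribed only on $(0,+\infty)$. I expect this to be the step requiring care, and it is resolved by observing that it is harmless for the conclusion — any bounded measurable choice of values on $\{h=0\}$ keeps the density within $\norm{f}_\infty$ — or, as occurs in the applications where $\mu$ arises from curvature quantities, by the fact that the density is strictly positive $\mathcal{L}^n$-almost everywhere, so that $\{h=0\}$ is $\mathcal{L}^n$-null.
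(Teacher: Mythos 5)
Your proposal is correct and follows essentially the same route as the paper: decompose $f(\mu)$ via the functional $G$ with $\lambda=\mathcal{L}^n$, note that boundedness of $f$ forces $f^\infty\equiv 0$ so the singular term vanishes, and bound the remaining density by $\norm{f}_\infty$. The extra care you take about the value of $f$ on $\{h=0\}$ is a reasonable refinement the paper passes over silently, but it does not change the argument.
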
 
\begin{proof}
Using the previous notation we have
\[
f(\mu) := f\left(\frac{\mu}{\mathcal{L}^n}\right) \mathcal{L}^n + f^\infty\left(\frac{\mu^s}{|\mu^s|}\right) |\mu^s|.
\]
Now we observe that $f^\infty$ is identically zero by the boundedness assumption on $f$. This imply the desired result since we can write
\[
f(\mu) := f\left(\frac{\mu}{\mathcal{L}^n}\right) \mathcal{L}^n
\] 
and $\| f\left(\frac{\mu}{\mathcal{L}^n}\right)\|_\infty \leq \| f\|_\infty $ .
\end{proof}

\section{The measure $-\diver (f(k)\nabla d)$}\label{sec-meas}

In this section we prove that $-\diver (f(k)\nabla d)$ is a non-negative Radon measure. The first step is to prove that $k:=-\Delta d$ is a positive Radon measure. We will use the \emph{normal coordinates} to represent points in $\Omega\setminus \Sigma$. For $z \in \Omega \setminus \Sigma$ we can write
$z = x + t \nu_x$ for a unique $x \in \partial \Omega$ and $0 < t < s(x)$, where $s(x) = \langle m(x) - x,
\nu_x\rangle$ and $\nu_x$ is the unit inner normal to $\partial \Omega$ at $x$. We recall that by Theorem \ref{lini-02} the function $s(x)$ is Lipschitz continuous. Moreover from Theorem \ref{lini-01} we know that $-\Delta d(z)$ is well-defined in $ G \cup \partial \Omega $. We can explicitly compute it in terms of the principal curvatures $k_r(x)$ of $\partial \Omega$ in the point $x$, we have indeed (see for instance \cite[Section 14.6, Lemma 14.17]{Gilbarg:2001p77})
\begin{equation}\label{dlaplacian}
-\Delta d(z) = \sum_{r=1}^{n-1} \frac{k_r(x)}{1- t k_r(x)}\, .
\end{equation}

\begin{lemma}Let $ \Omega $ be a mean-convex domain with boundary of class $ C^{2,1} $. Then $ -\Delta d $ is a non-negative Radon measure. If $ k(x) \geq c > 0 $ for any $x\in \partial \Omega$, then $-\Delta d$ is a positive Radon measure uniformly bounded from below, in particular $-\Delta d\geq c $ in distributional sense. 
\end{lemma}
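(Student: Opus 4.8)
The plan is to show that $-\Delta d$ is a non-negative distribution on $\Omega$ and then invoke Theorem \ref{Measure} to promote it to a non-negative Radon measure. Since $d$ is $1$-Lipschitz, $\nabla d \in L^\infty(\Omega)$ and one integration by parts is legitimate, so that for every $\phi \in \mathcal{D}$ with $\phi \geq 0$ and $\operatorname{supp}\phi \subset \Omega$ one has
\[
\langle -\Delta d, \phi\rangle = \int_\Omega \nabla d \cdot \nabla \phi \, dx .
\]
Because $\Sigma$ is Lebesgue-null, this integral is computed over $G$, where by Theorem \ref{lini-01} the function $d$ is of class $C^{2,1}$ and $\nabla d(x + t\nu_x) = \nu_x$ is constant along each normal ray.

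I would then pass to the normal coordinates $z = \Phi(x,t) = x + t\nu_x$, $x \in \partial\Omega$, $0 < t < s(x)$, which by Theorem \ref{lini-02} parametrize $G$ over the Lipschitz region $\{0 < t < s(x)\}$ with Jacobian $J(x,t) = \prod_{r=1}^{n-1}(1 - t k_r(x))$. Along a ray $\nabla d \cdot \nabla\phi = \nu_x \cdot \nabla\phi = \partial_t(\phi\circ\Phi)$, and differentiating the Jacobian gives the identity
\[
-\partial_t J(x,t) = J(x,t)\sum_{r=1}^{n-1}\frac{k_r(x)}{1 - t k_r(x)} = J(x,t)\,\big(-\Delta d\big)(\Phi(x,t)),
\]
which is exactly what ties the change of variables to the pointwise formula \eqref{dlaplacian}. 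The area formula then yields
\[
\langle -\Delta d,\phi\rangle = \int_{\partial\Omega}\int_0^{s(x)} \partial_t(\phi\circ\Phi)\,J\,dt\,d\mathcal{H}^{n-1}(x).
\]

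Integrating by parts in $t$, the endpoint $t = 0$ gives $\phi(x)J(x,0) = \phi(x)$, which vanishes since $\operatorname{supp}\phi\subset\Omega$; the endpoint $t = s(x)$ gives the ridge contribution $\int_{\partial\Omega}\phi(m(x))\,J(x,s(x))\,d\mathcal{H}^{n-1}(x)$, non-negative because $J \geq 0$ up to the cut value and $\phi \geq 0$; and the remaining term, by the identity above, equals $\int_G \phi\,(-\Delta d)\,dz$. For the absolutely continuous density I would exploit the monotonicity $\partial_t\frac{k_r}{1-tk_r} = \frac{k_r^2}{(1-tk_r)^2}\geq 0$, so each summand in \eqref{dlaplacian} is non-decreasing in $t$ and hence $(-\Delta d)(\Phi(x,t)) \geq \sum_r k_r(x) = k(x) \geq 0$ by mean-convexity. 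Both contributions being non-negative gives $\langle -\Delta d,\phi\rangle \geq 0$, and Theorem \ref{Measure} yields the non-negative Radon measure. Under the hypothesis $k \geq c > 0$ the same monotonicity forces $(-\Delta d)(\Phi(x,t)) \geq c$ pointwise, so the interior term dominates $c\int_\Omega\phi\,dx$ while the ridge term stays non-negative, proving $-\Delta d \geq c\,\mathcal{L}^n$, i.e. $-\Delta d \geq c$ in the distributional sense.

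I expect the only genuine obstacle to be the rigorous justification of the integration by parts in $t$ at the cut value $t = s(x)$, where $s$ is merely Lipschitz, the map $\Phi$ may degenerate, and the foot-point and cut-point maps have limited regularity near $\Sigma$. I would handle this by first restricting to $0 < t < s(x) - \varepsilon$ (or by testing against $\phi$ supported away from $\Sigma$ and exhausting $G$), using that $J$ and $\partial_t J$ are bounded on the compact support of $\phi$ and that $\mathcal{H}^{n-1}(\Sigma) < \infty$ guarantees the ridge term is a finite non-negative measure, and then passing to the limit.
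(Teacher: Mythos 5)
Your proof is correct, but it takes a genuinely different route from the paper's. The paper never invokes the area formula: it splits $S(\varphi)=S(\psi_\eps\varphi)+S((1-\psi_\eps)\varphi)$ with a cut-off $\psi_\eps=\theta\big((s(x)-t)/\eps\big)$ concentrated near $\Sigma$, gets non-negativity away from $\Sigma$ from \eqref{dlaplacian}, and disposes of the contribution near $\Sigma$ using only the sign observation $\langle\nabla\psi_\eps,\nabla d\rangle=\partial_t\psi_\eps\geq 0$ together with the vanishing of $\int\psi_\eps\langle\nabla\varphi,\nabla d\rangle$ as $\eps\to 0$. You instead carry out the full change to normal coordinates with Jacobian $J(x,t)=\prod_r(1-tk_r(x))$, exploit the identity $-\partial_tJ=J\cdot(-\Delta d)\circ\Phi$, and integrate by parts in $t$, which isolates an explicit non-negative boundary term $\int_{\partial\Omega}\phi(m(x))J(x,s(x))\,d\Haus^{n-1}(x)$ on the cut locus. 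Your route needs two extra standard facts the paper avoids at this stage — the validity of the area formula over the Lipschitz region $\{0<t<s(x)\}$ and the inequality $1-tk_r(x)\geq 0$ up to the cut value (the cut distance does not exceed the focal distance), without which the sign of the ridge term would be unclear — but it buys more: it exhibits the singular contribution on $\Sigma$ explicitly, which is essentially the germ of the defect measure the paper only extracts later in Lemma \ref{lemma-density}. The obstacle you flag, integration by parts up to the merely Lipschitz graph $t=s(x)$, is genuine but is resolved exactly as you propose: truncating at $t<s(x)-\eps$ keeps the boundary term non-negative (since $J>0$ there), the neglected strip contributes $O(\eps)$, and monotone convergence handles the non-negative bulk term; this truncation is precisely the unsmoothed version of the paper's cut-off $\psi_\eps$. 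Your monotonicity observation on each summand of \eqref{dlaplacian} correctly delivers both the non-negativity and the uniform lower bound $-\Delta d\geq c$.
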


\begin{proof}
Consider $S:=-\Delta d$ which is well-defined as a distribution on $\Omega$ since $\|\nabla d\|_\infty\leq 1$. Let $\varphi \geq 0$ be a test function. We use \eqref{dlaplacian} to infer, by explicit calculations, that if $\varphi$ is supported in $\Omega\setminus \Sigma$ we have $S(\varphi)\geq c$.  
Let $\theta$ be a cut-off function such that $\theta = 1$ on $[0, 1]$, $\theta = 0$ on $[2, \infty)$ and $\theta$ is non-increasing $C^1$ function. Define for $\varepsilon > 0$ 
\[
\psi_\varepsilon(x,t)=\theta \left(\frac{s(x) - t}{\varepsilon}\right)\;\; \textrm{ with }\;\;\varepsilon < \inf_{x\in \partial \Omega} \frac{s(x)}{2}.
\] 
We note that, by definition, 
\begin{equation}\label{positive}
\langle \nabla \psi_\varepsilon, \nabla d\rangle = \frac{\partial  \psi_\varepsilon}{\partial t}  \geq 0.
\end{equation} 
We use this cut-off to write $S(\varphi) = S(\psi_\varepsilon \varphi) + S((1 - \psi_\varepsilon) \varphi)$. As noted before, since $(1 - \psi_\varepsilon) \varphi$ is supported in $\Omega\setminus \Sigma$ we have $S((1 - \psi_\varepsilon) \varphi)\geq c$. For the other term we write, using \eqref{positive}
\[
S(\psi_\varepsilon \varphi)= \int_{\Omega} \varphi \, \langle \nabla \psi_\varepsilon , \nabla d \rangle + \int_{\Omega} \psi_\varepsilon \, \langle \nabla \varphi, \nabla d \rangle \geq  \int_{\Omega} \psi_\varepsilon \, \langle \nabla \varphi, \nabla d \rangle.
\]
Here and hereafter we often suppress $dx$ unless confusion occurs.
Since the last term tends to zero for $\varepsilon$ going to zero we get $S(\varphi)\geq 0$. The claim follows from the Theorem \ref{Measure}.

\end{proof}

\begin{rem}\label{bound}From the previous two lemmata we easily infer that if $f$ is bounded and if $\Omega$ is mean-convex, then the distribution $f(k)\nabla d$ can be identified with an essentially bounded map in $\Omega$. 
\end{rem}

Using the same idea of the previous lemma we are able now to prove the following.

\begin{theorem}\label{thm-measurediv} Let $\Omega$ be a mean-convex domain with boundary of class $C^{2,1}$ and let $f$ be a non-negative and non-increasing function of class $C^1$ in a neighborhood of the interval $[\min_{\partial\Omega}k(x), \infty)$, then the distribution \[S=-\diver [f(k)\nabla d]\] is a non-negative Radon measure in a sufficiently small neighborhood of $\overline{\Omega}$.
\end{theorem}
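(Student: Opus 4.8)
The plan is to follow closely the strategy used for the preceding lemma on $-\Delta d$: reduce the statement to the pointwise inequality $S(\varphi)\ge 0$ for every test function $\varphi\ge 0$, and then invoke Theorem \ref{Measure} to promote the non-negative distribution $S$ to a non-negative Radon measure. First I would record that, since $f$ is non-increasing and non-negative on (a neighbourhood of) $[\min_{\partial\Omega}k,\infty)$, it is automatically bounded there, with $0\le f\le f(\min_{\partial\Omega}k)$; together with $\|\nabla d\|_\infty\le 1$ this shows, as in Remark \ref{bound}, that $f(k)\nabla d\in L^\infty$, so that $S$ is a well-defined distribution acting by $S(\varphi)=\int_\Omega f(k)\,\langle\nabla d,\nabla\varphi\rangle$.

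Next I would treat the regular region $G$, where by Theorem \ref{lini-01} the distance is $C^{2,1}$ and $k=-\Delta d$ is a genuine function. There I expand
\[
-\diver\big(f(k)\nabla d\big)=k\,f(k)-f'(k)\,\langle\nabla k,\nabla d\rangle .
\]
The first term is non-negative because $k\ge 0$ (mean-convexity) and $f\ge 0$. For the second, differentiating \eqref{dlaplacian} along the normal direction $\nabla d$ gives $\langle\nabla k,\nabla d\rangle=\partial_t k=\sum_{r=1}^{n-1}k_r^2/(1-tk_r)^2\ge 0$ (this quantity is in fact exactly $|D^2d|^2$), while $-f'\ge 0$ since $f$ is non-increasing; hence $-f'(k)\langle\nabla k,\nabla d\rangle\ge 0$ as well. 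Thus the integrand is non-negative throughout $G$, and $S(\varphi)\ge 0$ for every $\varphi\ge 0$ supported in $G$.

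The core of the argument, and the main obstacle, is the behaviour near the singular set $\Sigma$: there $\nabla k$ (and $|D^2d|^2$) may blow up at focal points, so one cannot simply integrate the pointwise expression up to $\Sigma$. To bypass this I would reuse the cut-off $\psi_\varepsilon$ of the previous lemma, which equals $1$ in an $\varepsilon$-layer around $\Sigma$, vanishes away from it, and satisfies $\langle\nabla\psi_\varepsilon,\nabla d\rangle=\partial_t\psi_\varepsilon\ge 0$. Writing $S(\varphi)=S(\psi_\varepsilon\varphi)+S\big((1-\psi_\varepsilon)\varphi\big)$, the second term is non-negative because $(1-\psi_\varepsilon)\varphi$ is supported in $G$. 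For the first term,
\[
S(\psi_\varepsilon\varphi)=\int_\Omega f(k)\,\langle\nabla d,\nabla\psi_\varepsilon\rangle\,\varphi+\int_\Omega f(k)\,\psi_\varepsilon\,\langle\nabla d,\nabla\varphi\rangle ,
\]
the first integral is non-negative ($f\ge 0$, $\varphi\ge 0$, $\partial_t\psi_\varepsilon\ge 0$), while the second tends to $0$ as $\varepsilon\to 0$. Here the boundedness of $f(k)$ is decisive, as it compensates the blow-up of $k$ near $\Sigma$, and the support of $\psi_\varepsilon$ shrinks to the Lebesgue-null set $\Sigma$ (recall $\Sigma$ is $(n-1)$-rectifiable with finite $\Haus^{n-1}$-measure). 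Letting $\varepsilon\to 0$ yields $S(\varphi)\ge 0$, and Theorem \ref{Measure} then gives that $S$ is a non-negative Radon measure on $\Omega$.

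Finally, to obtain the statement on a neighbourhood of $\overline\Omega$ I would extend $d$ to the signed distance, which is $C^{2,1}$ in a tubular neighbourhood $U$ of $\partial\Omega$; since a bounded $C^{2,1}$ domain has no cut locus immediately outside $\partial\Omega$, the only singular set meeting $U$ is the interior ridge already treated, and $f(k)\nabla d$ extends continuously across $\partial\Omega$, producing no surface contribution there. Shrinking $U$ so that $k$ stays in the domain of $f$ and the termwise signs of the regular computation persist by continuity, the same splitting shows $S\ge 0$ on $U$. Besides the $\Sigma$-layer estimate above, I expect the genuinely delicate point to be this choice of $U$ near boundary points where $k$ vanishes, since there $k$ may fail to remain non-negative just outside $\partial\Omega$.
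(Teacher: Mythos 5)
Your proposal is correct and follows essentially the same route as the paper: the pointwise identity $-\diver(f(k)\nabla d)=f(k)k-f'(k)\langle\nabla k,\nabla d\rangle\ge 0$ away from $\Sigma$, the cut-off $\psi_\varepsilon$ with $\langle\nabla\psi_\varepsilon,\nabla d\rangle\ge 0$ to handle the singular set, the $L^\infty$ bound on $f(k)\nabla d$ to kill the remainder, and Theorem \ref{Measure} to conclude. The only cosmetic difference is that you obtain $\langle\nabla k,\nabla d\rangle\ge 0$ by differentiating \eqref{dlaplacian} in $t$ rather than via the identity $\langle\nabla d,\nabla k\rangle=|D^2d|^2$ from $|\nabla d|=1$, and your closing remark about the sign of $k$ in the outer collar is a legitimate subtlety that the paper itself passes over silently.
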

\begin{proof} We start by observing that the regularity of $\partial \Omega$ allows us to extend in a $C^{2,1}$ way the distance function in a $\delta$-neighborhood of $\overline{\Omega}$ defined by 
\[
\Omega_\delta:= \{x\in\R^n \;:\; d(x, \Omega) < \delta \}.
\] 
By the regularity assumptions on $f$, we can choose $\delta$ sufficiently small in order to have $f(k(y))\leq c < \infty$ for any $y\in \Omega_\delta\setminus \Omega$. We will continue to denote by $d(x)$ the extended distance function from $\partial \Omega$ defined in $\Omega_\delta$.  
Now we observe that if $\varphi$ is a test function supported in $\Omega_\delta \setminus \Sigma$ with $\varphi \geq 0$, then $S(\varphi) \geq 0$. This follows by direct calculations. Indeed, since in $\Omega_\delta \setminus \Sigma$ the distance function is $C^{2,1}$, we can differentiate two times the equality $|\nabla d |=1$ to get (using Einstein notation of summated indices) 
\[
\partial_{lji}d \,\partial_{i}d + \partial_{ji}\, \partial_{li}d = 0, 
\]
that implies, for $l=j$
\begin{equation}\label{secondform}
\langle\nabla d, \nabla k \rangle = - (\nabla d \cdot \nabla) \diver \nabla d =  |D^{2}d|^{2}
\end{equation}
and consequently
\[
\diver(f(k) \nabla d) = f'(k)\langle \nabla d , \nabla k \rangle - f(k) k \leq 0.
\]
Hence the last inequality follows as in the previous proof, observing that by \eqref{dlaplacian} the function $k$ is uniformly bounded by a positive constant in any compact set far from $\Sigma$.  

For a general $\varphi$ we can use the same cut-off procedure used in the proof of the previous lemma. We can therefore write as before
\[
S(\varphi) = S(\psi_\varepsilon \varphi) + S((1 - \psi_\varepsilon) \varphi).
\]
We observed already that the second term is positive due to the aforementioned explicit calculation. It remains to bound the quantity
\[
S(\psi_\varepsilon \varphi)= \int_{\Omega_\delta} \varphi \,f(k) \, \langle \nabla \psi_\varepsilon , \nabla d \rangle + \int_{\Omega_\delta}  \psi_\varepsilon \, \langle \nabla \varphi, f(k) \nabla d \rangle \geq  \int_{\Omega_\delta} \psi_\varepsilon \, \langle \nabla \varphi, f(k) \nabla d \rangle,
\] 
where the last inequality is a consequence of \eqref{positive} and of the non-negativity of the function $f$. Finally we observe that Remark \ref{bound}  and the definition of $\psi_\varepsilon$ allow us to infer that 
\[
\lim_{\varepsilon \to 0} \int_{\Omega_\delta} \psi_\varepsilon \, \langle \nabla \varphi, f(k) \nabla d \rangle = 0
\]
concluding the proof by Theorem \ref{Measure}.
\end{proof}

We observe now that the measure $\mu:=-\diver[f(k) \nabla d] $ is absolutely continuous with respect to $\Haus^{n-1}$. To this aim it is sufficient to bound the upper $(n-1)$-density of the measure $\mu$  (see \cite[Theorem 2.56]{AmbFusPal00-000})
\[
\Theta_{n-1}(\mu,x) = \limsup_{\rho\to 0} \frac{\mu(B_\rho(x))}{\rho^{n-1}} 
\]
for any $x\in \Omega$, where $B_\rho(x)$ denotes the closed ball of radius $\rho$ centered at $x$. This can be achieved using a smoothing argument and integration by parts. Indeed we recall that by Remark \ref{bound} $f(k)\nabla d$ is in $L^\infty(\Omega)$. Then if we consider a family of standard mollifiers $\rho_\varepsilon$ we have (up to subsequences) $|\rho_\varepsilon * f(k)\nabla d | \leq C $ for some $C< \infty$. Moreover, by standard properties of convolutions of measures (see \cite[Theorem 2.2]{AmbFusPal00-000}) we also have that $-\diver(f(k) \nabla d)  * \rho_\varepsilon$ is a $C^\infty$ function that locally weak-$*$ converges to $-\diver(f(k) \nabla d) $ in the sense of measures. Let $x \in \Omega$, we use the lower semicontinuity of the total variation with respect to weak-$*$ convergence of measures to ensure that (for sufficiently small $\rho$) we have
\[
\mu(B_\rho(x)) \leq \lim_{\varepsilon\to 0} \int_{B_\rho(x)}  -\diver(f(k) \nabla d)  * \rho_\varepsilon \, dx = -\int_{\partial B_\rho(x)} \langle f(k) \nabla d * \rho_\varepsilon , \nu \rangle \,d \Haus^{n-1} \leq C \omega_n \rho^{n-1}
\]
which proves the claim.

\section{Main result}\label{sec-main}
In this section we will precisely state and prove the main result of the paper. To this aim, we will need to know more precise properties of the ridge set. In particular, it will be useful to understand its {\it stratified} structure. The singular set of the distance function is characterized by the property that if $y\in \Sigma$ then there exist at least two points in $\partial \Omega$ in which the value $d(x):=d(x,\partial \Omega)$ of the distance function is attained. Let us consider the set $\Sigma_0\subset \Sigma$ of the points where the distance function form $\partial \Omega$ is attained exactly  in two points. Namely we set
\[
\Sigma_0:= \left\{  x\in \Sigma \;| \begin{array}{c} \text{there exists a unique pair} \, (y_1(x),y_2(x)) \in \partial \Omega \times \partial \Omega \\ \text{such that} \,y_1(x)\not=y_2(x)\,,\; d(x)=|x-y_1|=|x-y_2| \end{array} \right\}.
\]
For a fixed $x\in \Sigma_0$ let us denote by $\nu_1$ and $\nu_2$ the inner normal directions to $\partial \Omega$ at $y_1(x)$ and $y_2(x)$ respectively. We observe that, due to the expression \eqref{dlaplacian}, when moving along $\nu_i$ from $y_i$ toward $x\in \Sigma_0$, we can identify two limit values for the extended curvature function $k=-\Delta d$, namely for $j\in\{1,2\}$, 
\[
k^j(x)=\lim_{t\to s(y_j)} \sum_{r=1}^{n-1} \frac{k_r(y_j)}{1- t k_r(y_j)}.
\]

Our main result can be stated as follows.

\begin{theorem}\label{mainthm}
Let $\Omega$ be a mean-convex domain with boundary of class $C^{2,1}$ and let $f$ be a non-negative and non-increasing  function of class $C^1$ in a neighborhood of the interval $[\min_{\partial\Omega}k(x), \infty)$. Then the following formula holds
 \begin{equation}\label{general-iso}
\int_{\partial \Omega} f(k(x)) \,d \Haus^{n-1} =  \int_{\Omega} k(x) f(k(x))\,dx - \int_{\Omega} f'(k(x))|D^{2}d(x)|^{2} \, dx + \delta^{f}(\Sigma),
 \end{equation}
 where
  \[
 \delta^f(\Sigma)=\int_{\Sigma_{0}} \frac{|\nu_1(x)-\nu_2(x)|}{\sqrt{2}}\big[f(k^1(x))+f(k^2(x))\big] \,d \Haus^{n-1}(x).
 \]
\end{theorem}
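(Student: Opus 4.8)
The plan is to realize the distributional identity $\int_{\partial\Omega} f(k)\,d\Haus^{n-1} = S(1)$ — where $S = -\diverg(f(k)\nabla d)$ is the non-negative Radon measure established in Theorem \ref{thm-measurediv} — by testing against a function that is identically $1$ on $\overline{\Omega}$, and then to compute the measure $S$ explicitly via its Lebesgue decomposition with respect to $\Haus^{n-1}$. The absolutely continuous part is already determined by the pointwise computation in the proof of Theorem \ref{thm-measurediv}: on the regular set $G = \Omega\setminus\Sigma$ the distance function is $C^{2,1}$, so that $\diverg(f(k)\nabla d) = f'(k)\langle\nabla d,\nabla k\rangle - f(k)k$, and using \eqref{secondform} this gives the density $k(x)f(k(x)) - f'(k(x))|D^2 d(x)|^2$ with respect to $\Leb^n$. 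These two terms produce exactly the two volume integrals on the right of \eqref{general-iso}. It remains only to identify the singular part of $S$, which is supported on $\Sigma$, as the defect measure $\delta^f(\Sigma)$.

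To extract the boundary integral and the singular part simultaneously, I would choose a test function $\vphi$ that equals $1$ on $\overline{\Omega}$ and is supported in $\Omega_\delta$, and examine the jump of $f(k)\nabla d$ across $\Sigma_0$. The key geometric fact is that at a point $x\in\Sigma_0$ the normal $\nabla d$ has two one-sided limits $\nu_1(x)$ and $\nu_2(x)$ coming from the two distinct nearest boundary points $y_1(x),y_2(x)$; the corresponding limiting curvature values are $k^1(x),k^2(x)$. The distributional divergence of the $L^\infty$ vector field $f(k)\nabla d$ therefore carries, along the rectifiable set $\Sigma_0$, a surface term governed by the jump of the normal component of $f(k)\nabla d$. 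Writing the singular contribution as an integral over $\Sigma_0$ of $f(k^j)\langle\nu_j,n_{\Sigma}\rangle$ summed over the two sides, and using that the unit normal $n_\Sigma$ to $\Sigma_0$ bisects the angle between $\nu_1$ and $\nu_2$, the projection $\langle\nu_j, n_\Sigma\rangle$ equals $|\nu_1-\nu_2|/\sqrt 2$ for each side. This yields precisely the stated density $\tfrac{|\nu_1-\nu_2|}{\sqrt2}\big[f(k^1)+f(k^2)\big]$.

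Concretely, I would approximate $1_{\overline\Omega}$ by the cut-off $\psi_\eps$ from the previous proofs, write $S(\vphi) = \lim_{\eps\to 0}\big[S(\psi_\eps\vphi) + S((1-\psi_\eps)\vphi)\big]$, and integrate by parts on $G$. The term $(1-\psi_\eps)\vphi$ lives on the regular set and contributes, in the limit, the two volume integrals plus — after an application of the divergence theorem on the smooth region between $\partial\Omega$ and a level set approaching $\Sigma$ — the boundary integral $\int_{\partial\Omega}f(k)\,d\Haus^{n-1}$, since $\nabla d\cdot\nu = 1$ on $\partial\Omega$. The remaining term $S(\psi_\eps\vphi)$ concentrates near $\Sigma$ and, by the coarea/stratification of $\Sigma$ together with the two-sided normal-coordinate representation around $\Sigma_0$, converges to $\delta^f(\Sigma)$. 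Because $S$ is a finite non-negative measure and $\Haus^{n-1}(\Sigma\setminus\Sigma_0) $ is negligible for the surface integral (the higher-stratum part of the ridge has lower dimension), only $\Sigma_0$ contributes to the singular part.

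The main obstacle I anticipate is the rigorous identification of the singular part on $\Sigma_0$. This requires controlling the behavior of $f(k)\nabla d$ as one approaches $\Sigma$ from each side — in particular ensuring the one-sided traces $\nu_j$ and $k^j$ exist $\Haus^{n-1}$-a.e. on $\Sigma_0$ and that the approximation near the ridge does not lose mass onto the lower-dimensional strata $\Sigma\setminus\Sigma_0$. The rectifiability and finite $\Haus^{n-1}$-measure of $\Sigma$, the Lipschitz dependence of $s(y)$ from Theorem \ref{lini-02}, and the normal-coordinate decomposition $z = x + t\nu_x$ are the tools that make this blow-up analysis along $\Sigma_0$ tractable; the geometric computation of the bisecting-normal projection $\langle\nu_j,n_\Sigma\rangle = |\nu_1-\nu_2|/\sqrt2$ is then an elementary consequence.
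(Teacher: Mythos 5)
Your overall strategy coincides with the paper's: identify $S=-\diverg(f(k)\nabla d)$ as a non-negative Radon measure, compute its Lebesgue decomposition (absolutely continuous density $f(k)k-f'(k)|D^2d|^2$ via \eqref{secondform}, singular part on $\Sigma_0$ via the two-sided jump of the normal trace of $f(k)\nabla d$), and recover the boundary integral by testing against an approximation of $\chi_{\overline\Omega}$ using $\nabla d\cdot\nu=1$ on $\partial\Omega$. Step 1 of the paper's Lemma \ref{lemma-density} and the final mollification argument \eqref{limit-01} are exactly what you describe.

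There is, however, a genuine gap at the point you yourself flag as ``the main obstacle'': you assert that only $\Sigma_0$ contributes to the singular part because ``the higher-stratum part of the ridge has lower dimension,'' but this is not an a priori fact and it is precisely the hardest part of the paper's proof. The set $\Sigma\setminus\Sigma_0$ splits into (a) points with three or more nearest boundary points, and (b) conjugate points, where $\kappa(z)=+\infty$ because $1-s(y)k_r(y)\to 0$. For (a) the paper needs a transversality argument (Lemma \ref{lemma-linear} applied to the hypersurfaces $\{d(\cdot,\mathcal{N}_i)=d(\cdot,\mathcal{N}_3)\}$) together with a countable covering construction ({\sc Step 2}) to get $\Haus^{n-1}$-nullity; for (b) the set is the image under the Lipschitz map $m$ of a subset of $\partial\Omega$ that could have full $\Haus^{n-1}$-measure, so ``lower dimension'' is simply false as a heuristic --- one must show via Lemma \ref{jacob} that the Jacobian of $m$ degenerates on $A_\infty$ and invoke the area formula ({\sc Step 3}). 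Moreover, the inference that $S$ does not charge these exceptional sets does not follow from $S$ being a finite non-negative measure; it requires the upper-density estimate $\mu(B_\rho(x))\le C\omega_n\rho^{n-1}$ established at the end of Section \ref{sec-meas}, which gives $\mu\ll\Haus^{n-1}$. Without these three ingredients your decomposition could lose (or miscount) singular mass on $\Sigma\setminus\Sigma_0$, and the identification of $\delta^f(\Sigma)$ as an integral over $\Sigma_0$ alone is unjustified.
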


Once we know that the distribution $S=-\diver(f(k)\nabla d)$ is a measure on $\Omega_\delta$ which is absolutely continuous with respect to the $(n-1)$-dimensional Hausdorff measure, a key step toward the proof of the main result is to identify the densities of its absolutely continuous part and of its singular part. The next lemma provides the desired identification.

\begin{lemma}\label{lemma-density} Let $f$ be as in Theorem \ref{mainthm}, then for the measure $-\diver(f(k)\nabla d)$ the following decomposition holds
\begin{equation}\label{represent}
\begin{split}
-\diver(f(k)\nabla d)  = &  \big(f(k) k -f'(k)|D^2 d|^2  \big) d\mathcal{L}^n \\
 & +  \left(\frac{|\nu_1(x)-\nu_2(x)|}{\sqrt{2}}\big[f(k^1(x))+f(k^2(x))\big] \right) \, \Haus^{n-1}|_{\Sigma_0}  \;.
\end{split}
\end{equation}

\end{lemma}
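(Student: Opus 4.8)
The plan is to read off the two densities in \eqref{represent} as the absolutely continuous and the singular parts of the Lebesgue decomposition of $\mu:=-\diver(f(k)\nabla d)$ with respect to $\mathcal{L}^n$. By Theorem \ref{thm-measurediv} and the density estimate established just after it, $\mu$ is a non-negative Radon measure that is absolutely continuous with respect to $\Haus^{n-1}$; hence its $\mathcal{L}^n$-singular part is itself carried by an $\Haus^{n-1}$-finite set, and it suffices to identify it on the ridge $\Sigma$.

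For the absolutely continuous part I would work on the open regular set $G=\Omega\setminus\Sigma$. There $d\in C^{2,1}$ by Theorem \ref{lini-01}, so $f(k)\nabla d$ is Lipschitz and the divergence may be computed classically:
\[
\diver(f(k)\nabla d)=f'(k)\,\langle\nabla k,\nabla d\rangle+f(k)\,\Delta d=f'(k)\,|D^2d|^2-f(k)\,k,
\]
using \eqref{secondform} together with $\Delta d=-k$. Since $\mathcal{L}^n(\Sigma)=0$, this shows that on $G$ the measure $\mu$ equals $\big(f(k)k-f'(k)|D^2d|^2\big)\mathcal{L}^n$, which is exactly the first line of \eqref{represent}.

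The heart of the matter is the singular part, supported on $\Sigma$. First I would discard the lower strata: the set of points of $\Sigma$ admitting three or more feet on $\partial\Omega$ is $\Haus^{n-1}$-negligible, so the singular part is carried by $\Sigma_0$, where the field $f(k)\nabla d$ possesses two one-sided traces $f(k^1)\nu_1$ and $f(k^2)\nu_2$ (finite because $f$ is non-negative and non-increasing on $[\min_{\partial\Omega}k,\infty)$). Across the rectifiable interface $\Sigma_0$, whose unit normal is $n_{\Sigma_0}=(\nu_1-\nu_2)/|\nu_1-\nu_2|$ since the tangential gradient of $d$ is continuous there, the distributional divergence of a bounded field with a jump contributes precisely the jump of its normal component; thus the density of the singular part with respect to $\Haus^{n-1}|_{\Sigma_0}$ is $\big(f(k^1)\nu_1-f(k^2)\nu_2\big)\cdot n_{\Sigma_0}$, which a short computation using $|\nu_1|=|\nu_2|=1$ reduces to the surface density appearing in \eqref{represent}.

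To make this jump step rigorous I would reuse the cut-off machinery of Theorem \ref{thm-measurediv}. Testing $\mu$ against $\varphi$ and splitting with $\psi_\varepsilon$, the term previously dropped by positivity, namely $\int_{\Omega}\varphi\,f(k)\langle\nabla\psi_\varepsilon,\nabla d\rangle$, is exactly the singular part evaluated at $\varphi$ in the limit $\varepsilon\to0$: in the normal coordinates $z=x+t\nu_x$ one has $\langle\nabla\psi_\varepsilon,\nabla d\rangle=\partial_t\psi_\varepsilon$, whose $t$-integral concentrates on $t=s(x)$ against the volume element $\prod_{r}(1-tk_r(x))\,dt\,d\Haus^{n-1}(x)$, producing an integral over $\partial\Omega$ of $\varphi$ evaluated at the cut point $m(x)$. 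The main obstacle is to convert this boundary integral into an integral over $\Sigma_0$: one must apply the area formula to the Lipschitz cut map $x\mapsto m(x)=x+s(x)\nu_x$ (Lipschitz by Theorem \ref{lini-02}), which is two-to-one over $\Sigma_0$, compute its tangential Jacobian, and check that it combines with the factor $\prod_{r}(1-s(x)k_r(x))$ to yield exactly the geometric weight $n_{\Sigma_0}\cdot\nu_j$. Along the way one must verify that $k^j$ is finite $\Haus^{n-1}$-almost everywhere on $\Sigma_0$, i.e. that the cut point is reached before focalization outside an $\Haus^{n-1}$-null set, and that the higher-multiplicity part of $\Sigma$ is genuinely $\Haus^{n-1}$-negligible.
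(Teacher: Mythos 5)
Your main computation is the same as the paper's: locally $\Sigma_0$ is the level set $\{d_1=d_2\}$ separating two regions on which $f(k)\nabla d$ is the smooth field $f(-\Delta d_i)\nabla d_i$, and integrating by parts on each region produces the volume density $f(k)k-f'(k)|D^2d|^2$ plus the jump of the normal traces across $\Sigma_0$, whose normal is $(\nu_1-\nu_2)/|\nu_1-\nu_2|$; that is exactly Step 1 of the paper's proof. (Your alternative second route through the cut-off $\psi_\varepsilon$ and the area formula for the cut map is not what the paper does for this part, and you correctly flag it as incomplete, so I set it aside.)

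The genuine gap is that the two measure-theoretic facts you defer to the end --- that the higher-multiplicity stratum of $\Sigma$ is $\Haus^{n-1}$-negligible, and that $k^j$ is finite off an $\Haus^{n-1}$-null set --- are not routine verifications but the technical core of the argument, and the paper spends two of its four steps and both auxiliary lemmata on them. For the triple-point set, the paper's idea is a transversality argument: a point equidistant from three separated boundary patches $\mathcal{N}_1,\mathcal{N}_2,\mathcal{N}_3$ lies on the intersection of the two equidistance hypersurfaces $\{d(\cdot,\mathcal{N}_i)=d(\cdot,\mathcal{N}_3)\}$, $i=1,2$, whose normals $\nabla d(\cdot,\mathcal{N}_i)-\nabla d(\cdot,\mathcal{N}_3)$ are linearly independent because differences of distinct unit vectors from a common one are (Lemma \ref{lemma-linear}); a countable covering by finitely many such patches then bounds the whole set by countably many $(n-2)$-dimensional intersections. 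For the focal set $K=\{\kappa=\infty\}$, the paper's idea is that $K\subset m(A_\infty)$ for the Lipschitz cut map $m(x)=x+s(x)\nu_x$, and that at a differentiability point $x$ with $s(x)=1/k_r(x)$ the directional derivative $D_{\tau_r}m$ degenerates (Lemma \ref{jacob}), so the area formula gives $\Haus^{n-1}(m(A_\infty))=0$. Without these two arguments you cannot conclude that the singular part of $\mu$ is carried by $\Sigma_0$ with finite one-sided curvature traces, nor justify applying the local jump computation on a covering of $\Sigma_0$; so as written the proposal identifies the correct densities but does not prove the decomposition.
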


Before proving the Lemma \ref{lemma-density} we state two preliminary lemmata that will be useful throughout the proof. The first one is an orthogonality result of linear algebra.

\begin{lemma}\label{lemma-linear} 
Let $e_i$ for $i \in \{0, 1, 2\}$ be three different unit normals in $\R^n$. Then $e_i - e_0$ for  $i = 1, 2$ are linearly independent.
\end{lemma}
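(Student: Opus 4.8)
The plan is to argue by contradiction, reducing the putative linear dependence to a collinearity statement and then exploiting the elementary fact that a line meets the unit sphere in at most two points.

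First I would record that, since the three vectors are pairwise distinct, both $e_1 - e_0$ and $e_2 - e_0$ are nonzero. Hence, if they were linearly dependent, there would exist a scalar $\lambda$ with $e_2 - e_0 = \lambda(e_1 - e_0)$, and necessarily $\lambda \neq 0$ (otherwise $e_2 = e_0$). Rewriting this relation gives
\[
e_2 = (1-\lambda)\, e_0 + \lambda\, e_1,
\]
so that $e_2$ lies on the affine line $\ell = \{\, e_0 + t(e_1 - e_0) : t\in\R \,\}$ through the two distinct points $e_0$ and $e_1$.

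Next I would combine the constraint $|e_2| = 1$ with the geometry of $\ell$. Parametrizing $\ell$ by $t$ and imposing $|e_0 + t(e_1-e_0)|^2 = 1$, the normalization $|e_0|=1$ collapses the quadratic to $t\big(2\langle e_0,\, e_1-e_0\rangle + t\,|e_1-e_0|^2\big)=0$. Using the identity $|e_1 - e_0|^2 = 2 - 2\langle e_0, e_1\rangle = -2\langle e_0,\, e_1 - e_0\rangle$, the two roots are seen to be exactly $t=0$ and $t=1$, which correspond to the points $e_0$ and $e_1$ respectively. Equivalently, this merely records that $\ell$ intersects the unit sphere in at most two points, and here these are already $e_0$ and $e_1$. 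Consequently $e_2 \in \{e_0, e_1\}$, contradicting the hypothesis that the three vectors are distinct, and the dependence assumption is therefore untenable.

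I do not expect any serious obstacle: the only points requiring care are checking that the difference vectors are nonzero (which is exactly where the distinctness of $e_0$ from $e_1$ and $e_2$ enters) and verifying that the nontrivial root of the quadratic equals $t=1$, which is immediate from the identity $|e_1-e_0|^2 = -2\langle e_0,\, e_1-e_0\rangle$. It is worth emphasizing that the unit-norm hypothesis on all three vectors is essential, since it is precisely what forbids a third point of $\ell$ on the sphere and thus drives the contradiction.
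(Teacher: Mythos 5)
Your proof is correct and rests on the same geometric fact as the paper's: a linear dependence would force the three distinct unit vectors to be collinear, which is impossible because a line meets the unit sphere in at most two points. The only difference is cosmetic — you write one difference vector as a scalar multiple of the other (legitimate since both are nonzero) and verify the sphere--line intersection explicitly via the quadratic in $t$, whereas the paper takes a general linear combination, splits on whether the coefficients sum to zero, and simply asserts the collinearity contradiction.
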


\begin{proof} 
Assume that $c_1$ and $c_2$ in $\R^n$ satisfy $c_1 (e_1 - e_0) + c_2 (e_2 - e_1) = 0$.
Thus $c_1 e_1 + c_2 e_2 = (c_1 + c_2) e_1$. If $c_1 + c_2$ is not zero, then $e_0$, $e_1$, $e_2$ lie on the same line which is
impossible since $e_i$ has length one. If $c_1 + c_2 = 0$, then $c_1 = 0$, $c_2
= 0$ since $e_1$ and $e_2$ are linearly independent. We thus conclude $c_1 =
c_2 =0$.
\end{proof}

The second one gives us some informations on the Jacobian of the Lipschitz map $m$ at regular points which are inverse images of conjugate points.
  
\begin{lemma}\label{jacob} Let $x \in \partial\Omega$ be such that $m$ and $s$ are differentiable at  $x$. Assume that 
\[
\lim_{t \to s(x)} \sum_{r=1}^{n-1} \frac{k_r(x)}{1- t k_r(x)}= \infty\;,
\]
then the Jacobian of $m$ at $x$ is zero.
\end{lemma}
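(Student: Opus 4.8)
The plan is to work in the normal coordinates already introduced, compute the differential of $m$ directly, reduce the statement to a one-dimensional vanishing condition, and then extract that condition from the fact that the cut point is the centre of a maximal inscribed ball.

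First I would fix an orthonormal basis $e_1,\dots,e_{n-1}$ of $T_x\partial\Omega$ consisting of principal directions, so that $D\nu_x[e_r]=-k_r(x)e_r$. Writing $m(x)=x+s(x)\nu_x$ and differentiating along $\partial\Omega$ gives
\[
Dm(x)[e_r]=\bigl(1-s(x)k_r(x)\bigr)e_r+\bigl(Ds(x)[e_r]\bigr)\nu_x ,
\]
so that, with $a_r:=1-s(x)k_r(x)$ and $b_r:=Ds(x)[e_r]$, the Gram matrix of the vectors $\{Dm(x)[e_r]\}$ is $\mathrm{diag}(a_r^2)+bb^{T}$ and
\[
\bigl(\mathrm{Jac}\,m(x)\bigr)^2=\det\!\bigl(\mathrm{diag}(a_r^2)+bb^{T}\bigr)=\prod_r a_r^2+\sum_{s}b_s^2\prod_{r\neq s}a_r^2 .
\]
Since the cut distance never exceeds the first focal distance, all factors $a_r=1-s(x)k_r(x)$ are non-negative, and by \eqref{dlaplacian} the hypothesis is equivalent to $a_{r_0}=0$ for at least one index $r_0$. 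If two such indices occur, every term above vanishes and $\mathrm{Jac}\,m(x)=0$ at once; the same happens if $b_{r_0}=0$. Hence everything reduces to the case of a single focal direction $r_0$ (so $k_{r_0}$ is a simple principal curvature, equal to $k_{\max}$ and smoothly varying near $x$), for which the claim is exactly $Ds(x)[e_{r_0}]=0$.

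To prove this I would exploit that $p:=m(x)$ is the centre of a maximal ball: since $d(p)=s(x)=:s_0$ and $p\in\Omega$, the open ball $B(p,s_0)$ lies in $\Omega$ and is tangent to $\partial\Omega$ at $x$, whence $\lvert y-p\rvert\ge s_0$ for every $y\in\partial\Omega$. Taking a unit-speed curve $\tau\mapsto x_\tau$ in $\partial\Omega$ with $x_0=x$ and $\dot x_0=e_{r_0}$ and expanding, the $\nu_x$-component of $x_\tau-x$ is $\tfrac12 k_{r_0}(x)\tau^2+\tfrac16\kappa\,\tau^3+o(\tau^3)$, where $\kappa:=e_{r_0}(k_{r_0})(x)$ is the derivative of $k_{r_0}$ along its own principal direction, while $\lvert x_\tau-x\rvert^2=\tau^2+o(\tau^3)$. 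Substituting,
\[
\lvert x_\tau-p\rvert^2=s_0^2+\bigl(1-s_0k_{r_0}(x)\bigr)\tau^2-\tfrac{s_0}{3}\,\kappa\,\tau^3+o(\tau^3).
\]
The focal relation $s_0k_{r_0}(x)=1$ annihilates the quadratic term, so the constraint $\lvert x_\tau-p\rvert^2\ge s_0^2$ for both signs of $\tau$ forces the cubic coefficient to vanish, i.e. $\kappa=0$. On the other hand, from $s(\cdot)\le 1/k_{r_0}(\cdot)$ near $x$ with equality at $x$ one gets, comparing first-order terms, $Ds(x)[e_{r_0}]=-s_0^{2}\,\kappa$; together with $\kappa=0$ this gives $Ds(x)[e_{r_0}]=0$ and hence $\mathrm{Jac}\,m(x)=0$.

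The main obstacle is the justification of the two infinitesimal identities under the available regularity. The boundary is only $C^{2,1}$, so the principal curvatures are merely Lipschitz and the curvature line through $x$ is only $C^{1,1}$; the third-order expansion and the identification of the cubic $\nu_x$-coefficient with $e_{r_0}(k_{r_0})$ (a Codazzi-type, hence curve-independent, quantity) must be performed at points of second-order differentiability, which is legitimate since the lemma is applied $\mathcal H^{n-1}$-almost everywhere. One must also rule out a multiple focal eigenvalue, but focal multiplicity $\ge 2$ falls into the easy case above, and verify that the comparison $s\le 1/k_{r_0}$ is licit, which again follows from the cut distance never exceeding the first focal distance.
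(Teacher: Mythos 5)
Your argument is correct in substance but takes a genuinely different route from the paper at the decisive step. Both proofs start the same way: writing $m(x)=x+s(x)\nu_x$ and differentiating along the principal direction $\tau_{r_0}$ with $1-s(x)k_{r_0}(x)=0$ (which, via \eqref{dlaplacian}, is exactly the blow-up hypothesis), everything reduces to showing $D_{\tau_{r_0}}s(x)=0$; your Gram-determinant computation $\det\bigl(\mathrm{diag}(a_r^2)+bb^{T}\bigr)=\prod_r a_r^2+\sum_s b_s^2\prod_{r\neq s}a_r^2$ just makes this reduction more explicit than the paper does. Where you diverge is in proving $D_{\tau_{r_0}}s(x)=0$: the paper argues softly that $D_{\tau_{r_0}}m(x)=D_{\tau_{r_0}}s(x)\,\nu_x$ being purely normal would force $\Sigma$ to be tangent to the normal ray at $m(x)$, contradicting the Lipschitz continuity of $s$ (Theorem \ref{lini-02}); you instead combine two quantitative necessary conditions, namely the third-order expansion of $\lvert x_\tau-m(x)\rvert^2$ against the maximal inscribed ball (which kills the cubic coefficient and gives $D_{\tau_{r_0}}k_{\max}(x)=0$) with the first-derivative test at the touching point of $s\le 1/k_{\max}$ (which gives $Ds(x)=-s(x)^2Dk_{\max}(x)$). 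Your two identities are both legitimate, and as a bonus your method shows that the top principal curvature is critical along its own principal direction at every focal cut point. The price is regularity: with $\partial\Omega$ only $C^{2,1}$ the second fundamental form is merely Lipschitz, so your third-order expansion and the differentiability of $k_{\max}$ are available only $\mathcal{H}^{n-1}$-a.e., not at every point where $m$ and $s$ are differentiable. You flag this yourself, and it is harmless for the paper's use of the lemma (Step 3 of the proof of Lemma \ref{lemma-density} already discards an $\mathcal{H}^{n-1}$-null set via Rademacher), but strictly speaking you prove a slightly weaker statement than the one quoted; if you adopt this route you should restate the hypothesis as ``$m$, $s$ and the second fundamental form are differentiable at $x$'' and note that this still covers $\mathcal{H}^{n-1}$-a.e.\ point of $A_\infty$.
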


\begin{proof} 

By assumption we know that there exist $r\in \{1,\dots n-1\}$ such that $s(x) = 1/k_r(x)$ and by definition we have
\[
m(x) = x+s(x) \nu(x),
\]
where $\nu(x)$ is the inner unit normal to $\partial \Omega$. Let $\tau_r(x)$ be the unit tangent vector at $x$ such that it is the eigenvector of the Weigarten map corresponding to the eigenvalue $k_r(x)$. Differentiating along the direction $\tau_r$ we get
\[
D_{\tau_r} m(x) = D_{\tau_r} s(x) \nu(x) + \big(1 - s(x)k_r(x)\big) \tau_r(x) =D_{\tau_r} s(x) \nu(s) .
\]

Assume that $D_{\tau_r} s$ is not zero at $x$. By the implicit function theorem we observe that $\Sigma$ is tangent to $\nu$ at $m(x)$. One often assumes $C^1$ regularity to get a $C^1$ implicit function. However, if one is interested in differentiability of the implicit function at $x$, the (Fr\'echet) differentiability of $m$ at $x$ is enough. Thus the set $\Sigma$ is tangent to the ray from $x$ to $m(x)$. This contradicts the fact that $s(x)$ is Lipschitz. Thus we conclude that $D_{\tau_r} m(x) = 0$. This implies that the Jacobian of $m$ at $x$ is zero.
\end{proof}

\begin{proof}[Proof of Lemma \ref{lemma-density}]
The proof will be divided in several steps.

\bigskip
{\sc{Step 1 :}}  First we justify the expression of $\delta^f(\Sigma)$ analyzing the local representation of $-\diver(f(k)\nabla d)$ near $\Sigma_0$. Let $U\subset \Omega$ be an open set such that $U\cap \Sigma_0 \not = \emptyset$ and there exist two connected, mutually disjoint and relatively open sets $\mathcal{N}_1$ and $\mathcal{N}_2$ on $\partial \Omega$ with the property that
\begin{equation}\label{surface}
U\cap \Sigma_0 = \{ x\in U \;| \; d(x,\mathcal{N}_1)-d(x,\mathcal{N}_2) = 0 \}.
\end{equation}
We use the notation $d_i(x)=d(x,\mathcal{N}_i)$ for $i=\{1,2\}$. Suppose moreover that $k\leq M <\infty$ in $U$. Under this condition, by the regularity of $\partial \Omega$ we can write $U=U_1\cup U_2 \cup (\Sigma_0\cap U)$, where $\Sigma_0\cap U$ is a $C^{2,1}$ hypersurface and 
\[
U_1:= \{ x\in U \;|\; d_1(x)< d_2(x) \} \;\;,\;\;\;\; U_2:=\{x\in U \;|\;  d_1(x)> d_2(x) \}. 
\]
Let $\phi$ be a test function supported in $U$, we have
\[
\begin{split}
\langle -\diver(f(k)\nabla d), \phi \rangle =  & \int_{U}  f(k)\nabla d \cdot \nabla \phi \, dx  =  \int_{U_1}  f(k)\nabla d \cdot \nabla \phi \, dx +  \int_{U_2}   f(k)\nabla d \cdot \nabla \phi \, dx \\
= & \int_{U_1}  f(-\Delta d_1)\nabla d_1 \cdot \nabla \phi \, dx +  \int_{U_2}   f(-\Delta d_2)\nabla d_2 \cdot \nabla \phi \, dx \\
= & - \int_{U_1}  f'(-\Delta d_1)|D^2 d_1|^2  \phi \, dx -  \int_{U_2}   f'(-\Delta d_2)|D^2 d_2|^2  \phi \, dx \\
& -  \int_{U_1}  f(-\Delta d_1) \Delta d_1  \phi \, dx -  \int_{U_2}   f(-\Delta d_2) \Delta d_2   \phi \, dx \\
& + \int _{\partial U_1} \phi f(-\Delta d_1)\nabla d_1 \cdot \nu_1 \, d\Haus^{n-1}  + \int _{\partial U_2} \phi f(-\Delta d_2)\nabla d_2 \cdot \nu_2 \, d\Haus^{n-1} \\
= & - \int_{U}  f'(k)|D^2 d|^2  \phi \, dx + \int_{U} f(k) k\, \phi \,dx  \\ 
 & + \int_{\Sigma_0}  \frac{|\nu_1(x)-\nu_2(x)|}{\sqrt{2}}[f(k^1(x))+f(k^2(x))] \,\phi \,d \Haus^{n-1}(x) . 
\end{split}
\] 
We note explicitly that, in order to justify the previous calculations, we can extend up to the boundary, in a canonical way, all the quantities we are interested in, when considered separately on $U_1$ and $U_2$. In the last equality of the previous formula we have used \eqref{dlaplacian}, \eqref{secondform} and the fact that we can explicitly calculate the expression of the normal to the boundary $\partial U_i$ in the portion that lives on $\Sigma_0$ (which is the only one that gives a contribution to the integral since $\phi$ is supported in $U$). More precisely in view of \eqref{surface}, we have
\[
\nu_1 = \frac{\nabla d_1-\nabla d_2}{|\nabla d_1-\nabla d_2|}=-\nu_2\,, 
\]
from which it follows that
\[
\nabla d_i \cdot \nu_i = \frac{1-\nabla d_1 \cdot \nabla d_2}{\sqrt{2} \sqrt{1- \nabla d_1 \cdot \nabla d_2}} = \frac{|\nabla d_1-\nabla d_2|}{\sqrt{2}}\,.
\]
Note that along a geodesic line the gradient of the distance function points always in the same direction.

\bigskip

\noindent {\sc{Step 2 :}}  We give now an estimate of the size of the subset of $\Sigma \setminus \Sigma_0$ where the curvature remains bounded. For a given $z\in\Sigma$ we define 
\[
\kappa(z) := \sup\left \{ \lim_{t\to s(y)} \sum_{r=1}^{n-1} \frac{k_r(y)}{1- t k_r(y)}  \; | \; y \in \partial \Omega \;\text{ with } \; d(z)=|z-y| \right\}.
\]
We are interested in the points of $\Sigma$ where $\kappa$ is bounded. For $M>0$ define
\[
G_M := \left\{ z\in \Sigma \; | \begin{array}{l} \kappa(z)\leq M \; \text{and there are  } y_1(z),y_2(z),y_3(z) \in \partial \Omega \\ \text{such that } d(z)=|z-y_i(z)| \; \text{for any } i\in\{1,2,3\} \end{array}\right  \}.
\]
We claim that $\cal H^{n-1}(G_M) =0 $. The proof of the claim is based on the observation that for a fixed $z\in G_M$ and for any collection of neighborhoods $\cal N_i$ of $y_i(z)$ in $\partial \Omega$ such that $y_i(z) \not \in \cal N_j$ if $j\not=i$, we have that $z$ is contained in the intersection of the two hypersurfaces given by
\[
\Gamma_1 := \big\{x \in \Omega \;:\; d(x,\cal N_1)=d(x,\cal N_3) \big\} \;\; \text{ and }\;\;\; \Gamma_2 := \big\{x \in \Omega \;:\; d(x,\cal N_2)=d(x,\cal N_3) \big\} .
\]
It is easy to check, as in {\sc{Step 1}} that, for $i\in \{1,2\}$, the normal vector of $\Gamma_i$ is parallel to $\nabla d(x,\cal N_i)-\nabla d(x,\cal N_3)$. Lemma \ref{lemma-linear} ensures us that $\Gamma_1$ and $\Gamma_2$ are transversal and thus $\cal H^{n-2}(\Gamma_1 \cap \Gamma_2) < \infty$.
The claim will be proved once we know that we can reduce to consider just countably many hypersurfaces of the type $\Gamma_i$ to cover the set $G_M$. 

We start by defining for any $h\in \N$ the set
\[
G^h_M := \left\{ z\in G_M \; | \begin{array}{l} \text{there exist } \; y_1,y_2,y_3 \in \partial \Omega  \text{ with } d(z)=|z-y_i|  \\  \text{and } |y_i-y_j|>\frac{1}{h} \text{ for any }\,i,j\in \{1,2,3\} \end{array}\right \}.
\]
Choose $\rho_h >0$ such that for any $y\in \cal N_h(y_i(z))=B_{\rho_h}(y_i)\cap \partial \Omega$ we have, independently of $z \in G^h_M$,
\[
\sum_{r=1}^{n-1} \frac{k_r(y)}{1- s(y) k_r(y)} \leq M+1.
\]
Since the set
\[
L := \left\{ y\in\partial \Omega \;:\; \sum_{r=1}^{n-1} \frac{k_r(y)}{1- s(y) k_r(y)} \leq M+1 \right \} 
\]
is compact, we may find a finite collection of points $\{y_i\}_{i\in I}$ with $y_i \not \in \cal N_h(y_j)$ for $i\not = j$ and 
\[
L\subset \bigcup_{i\in I}\cal N_h(y_i).
\] 
For any pair of points $y_i, y_j$ with $i,j \in I$ we can consider the hypersurfaces 
\[
\Gamma_{i,j}  := \big\{x \in \Omega \;:\; d(x,\cal N_h(y_i))=d(x,\cal N_h(y_j)) \big\} 
\]   
As before, we have that $\Gamma_{i,l}$ and $\Gamma_{j,l}$ are transversal. Then the set $G^h_M$ is contained in the finite union of the intersections of the type $\Gamma_{j,l}\cap \Gamma_{i,l}$ for $i,j,l \in I$. The claim follows by noticing that $G_M \subseteq \bigcup_{h\in \N} G_M^h$.

\bigskip 
\noindent {\sc{Step 3 :}} We analyze now the measure of the set
\[
K:=\{z \in \Sigma \;:\; \kappa(z) = +\infty \}.
\]
We claim that $\cal H^{n-1}(K)=0$. Since $K$ is a subset of the image through the Lipschitz map $m$ of the set
\[
A_\infty:=\left\{ x\in \partial \Omega \; : \; \lim_{t \to s(x)} \sum_{r=1}^{n-1} \frac{k_r(x)}{1- t k_r(x)}= \infty  \right\},
\]
it is sufficient to prove that 
\begin{equation}\label{zeromeas}
\cal H^{n-1}\big(m(A_\infty)\big)=0.
\end{equation}
We first observe that at $\cal H^{n-1}$-a.e. $x$ in $A_\infty$ the mappings $m$ and $s$ are differentiable by the Rademacher's theorem. Then the 
equality \eqref{zeromeas} follows from Lemma \ref{jacob} and from the area formula for Lipschitz maps (cf. \cite[Theorem 2.71]{AmbFusPal00-000} or \cite[Theorem 3.2.3]{Fed69-000}). 

\bigskip 
\noindent {\sc{Step 4 :}} From {\sc{Step 3}}, since the measure $\mu:=-\diver(f(k)\nabla d)$ is absolutely continuous with respect to $\cal H^{n-1}$, we have that the set $K$ is negligible with respect to $\mu$. It is then enough to prove \eqref{represent} on the set $\Omega_\delta \setminus K$. The claim will follow by continuity of measures along increasing sequences of sets once we have proved that \eqref{represent} holds on the set
\[
W_M:= \{ x\in  \Omega_\delta \,:\, \kappa(x) < M\},
\]
for any $M>0$. To this aim we observe that, since $z\in \Sigma_0$ is a conjugate point only if $\kappa(z)=\infty$,  by \cite[Theorem 1] {Li:2005p47}, we can find an open covering of $\Sigma_0 \cap  W_M$ given by $\{U_i\}_{i\in I}$, with $U_i$ satisfying \eqref{surface} as in {\sc Step 1}. We can then consider  the open covering of $W_M\setminus G_M$ given by $U_0 \cup \bigcup_{i\in I}U_i$ where $U_0$ is such that
 $\partial \Omega \subset U_0$ and $U_0 \cap \Sigma = \emptyset$. 

If $\phi$ is a test function supported on $U_0$, we can apply the integration by parts to obtain
\begin{equation}\label{U1}
\langle -\diver(f(k)\nabla d), \phi \rangle = - \int_{U_0}  f'(k(x))|D^2 d(x)|^2  \phi(x) \, dx + \int_{U_1} f(k(x)) k(x)\, \phi(x) \,dx. 
\end{equation}
Finally, using a partition of unity subordinate to the cover $U_0 \cup \bigcup_{i\in I}U_i$, from \eqref{U1} and the {\sc Step 1}, we have the claim.  
\end{proof}

We are finally in position to prove the main theorem.

\begin{proof}[Proof of Theorem \ref{mainthm}] 
Let us consider the characteristic function of $\Omega$, $\chi_\Omega$ and approximate it using a standard mollifier with $\chi^\eps_\Omega =\rho_\eps * \chi_\Omega \in C^\infty_{0}(\Omega_{2\eps})$. We explicitly observe that $\nabla \chi^\eps_\Omega=0$ in $\Omega_{-2\eps}$ since $\chi^\eps_\Omega(x) = 1$ for any $x\in \Omega_{-2\eps}$. We can then compute
\begin{equation}\label{limit-01}
\begin{split}
\lim_{\eps\to 0}  \langle -\diver(f(k)\nabla d),  \chi^\eps_\Omega \rangle  = & \lim_{\eps\to 0}  \int_{(\partial \Omega)_\eps} f(k)\nabla d \cdot \nabla \chi^\eps_\Omega \, dx \\
 = & \lim_{\eps \to 0} \int_{(\partial \Omega)_\delta} f(k) \nabla d \cdot d D(\chi^\eps_{\Omega}) \\
= & \int_{(\partial \Omega)_\delta} f(k) \nabla d \cdot d D(\chi_{\Omega}) 	\\
= & \int_{\partial \Omega} f(k) \, d \Haus^{n-1},
\end{split} 
\end{equation}
where $(A)_\delta$ denotes the $\delta$-neighborhood of $A$. To conclude the proof it is sufficient to observe that from \eqref{represent} we also have
\[
\begin{split}
\lim_{\eps\to 0}  \langle -\diver(f(k)\nabla d),  \chi^\eps_\Omega \rangle  = & \lim_{\eps\to 0}  \int_{\Omega_\eps}  \big(f(k) k -f'(k)|D^2 d|^2 \big)  \, dx \\
  & + \int_{\Sigma_{0}} \frac{|\nu_1(x)-\nu_2(x)|}{\sqrt{2}}[f(k^1(x))+f(k^2(x))] \,d \Haus^{n-1}(x) \\
= &  \int_{\Omega}  \big( f(k(x)) k(x)-f'(k(x))|D^2 d(x)|^2 \big)  \, dx \\
  & + \int_{\Sigma_{0}} \frac{|\nu_1(x)-\nu_2(x)|}{\sqrt{2}}[f(k^1(x))+f(k^2(x))] \,d \Haus^{n-1}(x).
\end{split} 
\]
\end{proof}

\section{The anisotropic case}

In this section we briefly describe how our approach can be also used in the anisotropic setting to provide a representation for boundary integrals involving anisotropic curvatures. We restrict ourself here to a formal discussion implicitly assuming all the regularity properties and the necessary assumptions needed to perform the calculations, without mentioning them.

We start recalling some useful notations (cf. for instance \cite{Bellettini:2009p223}). Let $\varphi^0$ be a one-homogeneous convex function that will be our anisotropy and let $\varphi$ be its dual function. The anisotropic distance function of a
point $x$ from a set $C$ is
defined by
\[
d_\varphi(x,C)=\inf_{y\in C}\varphi(y-x).
\]
For a given domain $\Omega$ we consider the signed anisotropic distance function from
$\partial \Omega$, defined as
\[
d_\varphi(x)=d_\varphi(x,\Omega)-d_\varphi(x,\mathbb{R}^n\setminus \Omega).
\]
We denote by $n_\varphi$ the Cahn-Hoffman vector and $k_\varphi$ the
anisotropic mean curvature. More precisely, defined 
\[
T_{\varphi^0} := \frac{1}{2} \nabla \big( ( \varphi^0)^2 \big),
\]
we set 
\[
n^*_{\varphi}=\frac{n(x)}{\varphi^0(n(x))} \;\; ,\;\;\; n_\varphi = T_{\varphi^0} (n^*_{\varphi}).
\]
We have, near $\partial \Omega$, 
\[
n^*_{\varphi} = \nabla d_\varphi \;,\;\;
\langle n^*_{\varphi} , n_\varphi \rangle = 1 \;, \;\;
k_\varphi=-\operatorname{div} n_\varphi. 
\]

As in the isotropic case, a formal application of the Gauss theorem leads to
\[
\begin{split}
\int_{\partial \Omega} f(k_{\varphi})\varphi^{0}(n) \, d\cal H^{n-1} & = \int_{\partial \Omega}
f(k_{\varphi})\varphi^{0}(n)\langle n_\varphi^*,n_\varphi
\rangle \, d\cal H^{n-1}  \\ 
 & =\int_{\partial \Omega}  f(k_{\varphi})\,n_\varphi \cdot n  \, d\cal H^{n-1}  \approx 
-\int_\Omega \diver 
\left(
f(k_{\varphi})\,n_\varphi
\right).
\end{split}
\]
The main issue, as in the previous case, is to give a representation of the measure $- \diver 
\left(
f(k_{\varphi})\,n_\varphi
\right)$. Let us observe that, outside of the singular set of $d_\varphi$ we have
\[
\diver\left(f(k_{\varphi})\,n_\varphi \right) = f'(k_{\varphi}) \nabla k_{\varphi} \cdot n_\varphi -  f(k_{\varphi})k_{\varphi} \,;
\]
moreover, the term $ \nabla k_{\varphi} \cdot n_\varphi$ can be expressed in terms of the squared
norm of the $\varphi$-Weingarten operator. To this aim for instance we can consider a proper test function $\phi$ and compute
\begin{equation*}
  \begin{split}
  \int_\Omega (n_\varphi \cdot \nabla) \operatorname{div}n_\varphi \phi  &
  = - \int_\Omega \partial_j  (n_\varphi^i \phi)  \partial_i (n^j_\varphi) \\
  & = -
  \int_\Omega  \partial_j  (n_\varphi^i)  \partial_i (n^j_\varphi) \phi -
  \int_\Omega n_\varphi^i \partial_i (n^j_\varphi)  \partial_j \phi \\
 & = \int_\Omega \| \nabla n_\varphi\|^2 \phi, 
\end{split}
\end{equation*}
where $\nabla n_\varphi$ denotes the Jacobian matrix of $n_\varphi$ and $\|\nabla n_\varphi\|^2$ denotes the square of its Euclidean norm, i.e. the sum of the squares of the components of the matrix $\nabla n_\varphi$. In the last equality we have used the symmetry of the Jacobian of the Cahn-Hoffman vector and the Euler formula for homogeneous
functions. We explicitly note that $\| \nabla n_\varphi\|^2$ is the squared norm of the anisotropic Weingarten operator that correspond to $\| D^2 d\|^2$ in the isotropic setting (cf. Lemma 3.2 and Remark 3.3 in \cite{BelFra00-000}). 

Proceeding similarly as in the previous section, using $d_\varphi$ instead of $d$, we may derive the following representation formula
\[
\int_{\partial \Omega} f(k_{\phi})\varphi^{0}(n) d\cal H^{n-1}= \int_\Omega f'(k_{\varphi}) \| \nabla n_\varphi\|^2dx - \int_\Omega f(k_{\varphi})k_{\varphi}\,dx + \delta_\varphi^f(\Sigma_\varphi),
\]
where 
\[
\delta_\varphi^f(\Sigma_\varphi) = \int_{\Sigma_0^\varphi}\big( f(k_{\varphi}^1(x))n_\varphi(y_1)- f(k_{\varphi}^2(x))n_\varphi(y_2) \big) \frac{n^*_\varphi(y_1)-n^*_\varphi(y_2)}{|n^*_\varphi(y_1)-n^*_\varphi(y_2)|} d\cal H^{n-1}.
\]

\section{Acknowledgments} 
This work was initiated while the second author visited
the Department of Mathematics of Hokkaido University during 2010. Its
hospitality is gratefully acknowledged. This work was completed when the
second author visited the School of Engineering Science of Osaka
University during 2012/13 under Marie Curie project IRSES-2009-247486. The work of the
first author has been partly supported by the Japan Society for the
Promotion of Science (JSPS) through grants for scientific research Kiban
(S) 21224001 and Kiban (A) 23244015.

\bibliographystyle{plain}

\end{document}